\documentclass[12pt]{amsart}
\usepackage{latexsym,enumerate}
\usepackage{amssymb, xcolor,mathrsfs}
\usepackage{amsmath,amsthm,amsfonts,amssymb,latexsym, mathabx}
\usepackage{booktabs}
\usepackage{makecell}

\usepackage{listings}
\usepackage{xcolor}

\lstdefinelanguage{GAP}{
	morekeywords={if, then, elif, else, fi, for, while, do, od, function, local, return, end, and, or, not, in},
	sensitive=true,
	morecomment=[l]{\#},
	morestring=[b]",
	morestring=[b]',
}

\usepackage{comment}
\usepackage[
backend=biber,
style=numeric,      
sorting=nyt,        
maxnames=10,
giveninits=true,    
uniquename=false,
doi=false,
isbn=false,
url=true,
]{biblatex}
\newtheorem{theorem}{Theorem}[section]
\newtheorem{lemma}[theorem]{Lemma}
\newtheorem{proposition}[theorem]{Proposition}
\newtheorem{corollary}[theorem]{Corollary}

\newtheorem{conjecture}[theorem]{Conjecture}

\newtheorem{question}[theorem]{Question}

\theoremstyle{definition}
\newtheorem{definition}[theorem]{Definition}

\newtheorem*{thmA}{Theorem A}

\newcommand{\Q}{\mathbb{Q}}

\newcommand{\C}{\mathbb{C}}

\newcommand{\la}{\langle}
\newcommand{\ra}{\rangle}

\newcommand{\ol}{\overline}

\DeclareMathOperator{\SL}{SL}

\DeclareMathOperator{\Gal}{Gal}

\DeclareMathOperator{\Syl}{Syl}

\DeclareMathOperator{\Irr}{Irr}

\newcommand{\Cent}{\mathbf{Z}}

\makeatletter
\newcommand{\hathat}[1]{%
	\begingroup%
	\let\macc@kerna\z@%
	\let\macc@kernb\z@%
	\let\macc@nucleus\@empty%
	\hat{\raisebox{.35ex}{\vphantom{\ensuremath{#1}}}\smash{\hat{#1}}}%
	\endgroup%
}
\makeatother

\DeclareCiteCommand{\tabcite}
{\usebibmacro{cite:init}%
	\usebibmacro{prenote}}
{\usebibmacro{citeindex}%
	\usebibmacro{cite:comp}}
{}
{\usebibmacro{cite:dump}%
	\usebibmacro{postnote}}

\DefineBibliographyStrings{english}{%
	bibliography = {BIBLIOGRAPHY},
}

\usepackage{setspace}
\AtBeginBibliography{\singlespacing}
\DeclareFieldFormat{labelnumber}{#1}

\DeclareFieldFormat[article]{year}{(#1)}
\DeclareFieldFormat[book]{year}{#1}

\DeclareFieldFormat[article,book,inproceedings,thesis,unpublished,misc]{title}{#1}

\DeclareFieldFormat[article,inproceedings]{volume}{\mkbibbold{#1}}
\DeclareFieldFormat{journaltitle}{\mkbibemph{#1}}
\DeclareFieldFormat[book]{title}{\mkbibemph{#1}}
\DeclareFieldFormat{pages}{#1}
\DeclareFieldFormat{doi}{#1}
\DeclareFieldFormat[online]{title}{#1}
\DeclareFieldFormat{howpublished}{#1}
\DeclareFieldFormat{eprint}{preprint, arXiv:#1}
\DeclareFieldFormat{archivePrefix}{#1}
\DeclareFieldFormat[inproceedings]{booktitle}{#1}
\DeclareFieldFormat[inproceedings]{series}{\mkbibemph{#1}}

\renewbibmacro{in:}{}

\DeclareBibliographyDriver{article}{%
	\printnames{author}%
	\newunit\newblock
	\printfield{title}%
	\newunit\newblock
	\printfield{journaltitle}%
	\setunit{\addspace}%
	\printfield{volume}%
	\iffieldundef{number}{}{%
		(\printfield{number})%
	}%
	\setunit{\addcomma\addspace}%
	\iffieldundef{pages}{\addcomma}{
		\printfield{pages}%
	}
	\printfield{year}%
	\finentry
}

\DeclareBibliographyDriver{book}{%
	\printnames{author}%
	\newunit\newblock
	\printfield{title}%
	\iffieldundef{edition}{}{%
		\setunit{\addcomma\space}%
		\printfield[edition]{edition}%
		,
	}%
	\finentry
	\setunit{\space}
	\newunit\newblock
	\printlist{publisher}%
	\setunit{\addcomma\space}%
	\printfield{year}%
	.
	
}

\DeclareBibliographyDriver{misc}{%
	\printnames{author}%
	\newunit\newblock
	\printfield{title}%
	\setunit{\addcomma\space}%
	\printfield{eprint}
	\addcomma
	\addspace
	\printfield{year}%
	\finentry
}

\DeclareBibliographyDriver{unpublished}{%
	\printnames{author}%
	\newunit\newblock
	\printfield{title}%
	\iffieldundef{year}
	{}
	{%
		\setunit{\addcomma\space}%
		\printfield{year}%
	}%
	\finentry
}

\DeclareBibliographyDriver{inproceedings}{%
	\printnames{author}%
	\newunit\newblock
	\printfield{title}%
	\newunit\newblock
	\printfield{booktitle}%
	\newunit\newblock
	\printfield{series}%
	\setunit*{\addcomma\space}%
	\printfield{volume}%
	\newunit\newblock
	\printlist{publisher}%
	\setunit*{\addcomma\space}%
	\printlist{location}
	\setunit*{\addcomma\space}%
	\printfield{year}%
	\setunit{\addcomma\space}%
	\iffieldundef{pages}{}{%
		\printfield{pages}%
	}%
	\finentry
}

\DeclareBibliographyDriver{online}{%
	\printnames{author}%
	\newunit\newblock
	\printfield{title}%
	\newunit\newblock
	\printfield{howpublished}
	\setunit{\addcomma\space}
	\printfield{year}%
	\newunit\newblock
	\printfield{url}
	\finentry
}

\begin{document}
	
	\title[On a question of Navarro]{On a question of Navarro on the field of values of characters of solvable groups}

	\author[Christopher Herbig]{Christopher Herbig}
	\address{%
		Little Priest Tribal College\\
		Winnebago, NE 68071 \\
		USA}
	\email{christopher.herbig@littlepriest.edu}

	\subjclass[2020]{Primary 20C15, 11R18, 20D10}
	
	\keywords{characters, finite groups, character values, solvable groups}

	\date{\today}
	
	\maketitle
	
	\begin{abstract}
		In a 2023 survey paper, Gabriel Navarro posed the following problem: Given an irreducible character $\chi$ of some solvable group $G$ where $\chi$ either is 2-rational or has odd degree, does there exist some $g \in G$ such that $\Q(\chi(g)) = \Q(\chi)$? The answer was recently shown to be no in general by Ulrich Thiel. However, when $\chi$ is further assumed to be factorizable as a product of $p$-special characters, we are able to show that the answer is yes.
	\end{abstract}
	
	
\section{Introduction}

We first establish some notation and terminology. All groups are assumed to be finite, and all class functions of any group are assumed to take values in $\C$. Given a group $G$ and a class function $\chi$, we define $\Q(\chi)$ to be the field extension of $\Q$ generated by all values of $\chi$. We will use $\zeta_n$ to denote the primitive $n$th root of unity $\exp(2\pi i / n)$, and $\Q_n$ will denote the extension of $\Q$ generated by $\zeta_n$. Given any cyclotomic integer $\alpha$, we define $c(\alpha)$ to be the unique smallest integer such that $\Q(\alpha) \subseteq \Q_{c(\alpha)}$. We call $c(\alpha)$ the \emph{conductor} of $\alpha$. The conductor of a class function $\chi$, denoted $c(\chi)$, is defined analogously. If $n$ is an integer and $\pi$ is a set of primes, then $n_{\pi}$ is taken to be the largest divisor of $n$ whose prime divisors all lie in $\pi$. If $\pi = \{p\}$ for some prime $p$, then we will simply write $n_p$ instead of $n_{\{p\}}$. Given a set of primes $\pi$, we let $\pi'$ denote complement of $\pi$ among the set of all primes. Likewise, if $\pi = \{p\}$ for some prime $p$, we will simply write $p'$ instead of $\{p\}'$. For a group $G$, we let $\pi(G)$ denote the set of prime divisors of $|G|$. Otherwise, our notation will follow that of Isaacs in \cite{Is}. 

Gabriel Navarro has posed the following in \cite[Problem 7.3]{Nav}: If $G$ is a solvable group and $\chi \in \Irr(G)$ either is 2-rational or has odd degree, does there exist $g \in G$ such that $\Q(\chi(g)) = \Q(\chi)$? Using artificial intelligence, Ulrich Thiel has shown the answer to be no in \cite{Thiel} even when $G$ is assumed to have odd order. In GAP \cite{Gap}, one can construct the direct product of \texttt{SmallGroup(189,5)} with the cyclic group of order 9, and one sees that the condition fails for some of the degree 3 irreducible characters of this group. However, none of the characters of the solvable groups he provides as examples are quasiprimitive. This led us to wonder whether the answer is yes when $\chi$ is further assumed to be quasiprimitive. In fact, we are able to show that the answer is yes when $\chi$ is merely assumed to be fully factorizable as a product of $p$-special characters.\footnote{For a proof of the fact that quasiprimitive characters of solvable groups are fully factorizable, see Theorem 2.9 in \cite{Is2}.}
\begin{thmA}\label{big-conj}
	Let $G$ be a solvable group, and let $\chi \in \Irr(G)$ be a fully factorizable character which either is 2-rational or has odd degree. There exists $g \in G$ such that $\Q(\chi(g)) = \Q(\chi)$. 
\end{thmA}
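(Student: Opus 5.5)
The approach is to write $\chi=\prod_{p\in\pi(G)}\chi_p$ with each $\chi_p$ a $p$-special character. Then look for $g$ whose $p$-parts $g_p$ are chosen independently, so that each factor $\chi_p$ contributes its full field of values at $g$.

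The first step records the standard facts about $p$-special characters of solvable groups (Gajendragadkar, Isaacs). A $p$-special character has $\Q(\chi_p)\subseteq\Q_{|G|_p}$. Its restriction to a Hall $p'$-subgroup, or its value on $p'$-elements, is rational, and more precisely $\chi_p(xy)=\chi_p(x)\,\epsilon$-type behaviour holds for commuting $p$- and $p'$-parts. Concretely, I would use that for commuting $x$ a $p$-element and $y$ a $p'$-element, $\chi_p(xy)$ is controlled by $x$, and that the value $\chi_p(y)$ is a nonzero rational integer. With $g=\prod_p g_p$ and commuting parts, this gives
\[
\chi(g)=\prod_p \chi_p(g)\in \prod_p \Q_{|G|_p}.
\]
The fields $\Q_{|G|_p}$ are linearly disjoint, and $\Q(\chi)$ is the compositum of the fields $\Q(\chi_p)$. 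Hence $\Q(\chi(g))=\Q(\chi)$ as soon as $\Q(\chi_p(g))=\Q(\chi_p)$ for every $p$ and all factors are nonzero. Indeed, a product $\prod\alpha_p$ of nonzero elements from linearly disjoint Galois extensions is fixed by a Galois element $\sigma=\prod\sigma_p$ only if each $\sigma_p$ fixes $\alpha_p$ up to a scalar. That scalar has to be handled, and I would handle it by norm/degree arguments. So the problem reduces to a single $p$-special character and a single prime.

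The second step uses the hypothesis. If $\chi$ has odd degree, then $\chi_2$ has $2$-power degree, so $\chi_2$ is linear, and its values are roots of unity. I pick $g_2$ a generator of the image, so that $\Q(\chi_2(g_2))=\Q(\chi_2)$. If instead $\chi$ is $2$-rational, then every $\chi_p$ with $p$ odd has values in $\Q_{|G|_p}$, and $\chi_2$ is rational, so we may take $g_2=1$. For odd $p$, $\Gal(\Q_{p^a}/\Q)$ is cyclic, so $\Q(\chi_p)$ is determined by its degree. Choose a $p$-element $x$ in a Sylow $p$-subgroup $P$ so that $\Q(\chi_p(x))$ has maximal degree. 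I claim it equals $\Q(\chi_p)$. The reason is that $\Q(\chi_p)$ is generated by the values $\chi_p(x)$ over $p$-elements $x$, since the $p'$-part contributes only rationally. The subfields of a cyclic extension form a chain, so the compositum of the fields $\Q(\chi_p(x))$ is the largest among them. This is exactly where cyclicity, i.e.\ $p$ odd or the $2$-rational/linear hypothesis, is used.

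The third step assembles $g$, and I expect it to be the main obstacle. The chosen $x_p$ for different primes need not commute, and $\chi_p(x_q)$ for $q\ne p$ must be nonzero and rational. I would try to choose all $x_p$ inside a single nilpotent subgroup, say by working inside a Carter subgroup or by replacing $x_p$ by conjugates inside a Sylow system so that they lie in a common nilpotent subgroup. Failing that, I would take $g=x_p$ for one prime and the identity elsewhere. This requires showing $\chi_q(x_p)\neq0$, which holds because $p$-special $\chi_q$ restricted to $q'$-elements has values that are nonzero integers, a congruence argument with $\chi_q(1)$ a power of $q$ giving $\chi_q(y)\equiv\chi_q(1)\not\equiv0$. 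I would then combine primes via a product of commuting elements obtained from a nilpotent Hall-type subgroup. The delicate point is ensuring that the maximality choice for each $p$ survives inside that common subgroup.
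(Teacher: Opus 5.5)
Your plan has a genuine gap at its core: the reduction in your first step is false. It is not true that $\Q(\chi(g))=\Q(\chi)$ as soon as $\Q(\chi_p(g))=\Q(\chi_p)$ for every $p$. The scalar you intend to remove by norm/degree arguments can be $-1$, and then it genuinely survives. For example, $\alpha_3=\zeta_3-\zeta_3^{-1}$ generates $\Q_3$ and $\alpha_7=\zeta_7-\zeta_7^{-1}$ generates $\Q_7$, yet $\alpha_3\alpha_7$ is real, so $\Q(\alpha_3\alpha_7)\subsetneq\Q_{21}$. In the odd-degree case, $i\cdot\sqrt{-3}=-\sqrt{3}$ behaves the same way. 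This is exactly alternative (b) of Lemma 2.3: a magic field automorphism $\tau_\pi$ with $|\pi|$ even negates each $\chi_p(g)$, $p\in\pi$, and fixes the product. It is the real difficulty of the theorem, and your plan uses the parity hypothesis only to get cyclic Galois groups, never to confront it.

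Your third step is also missing. You give no way to choose the $x_p$ commuting while keeping each field maximal. Moreover, $\chi_p(xy)$ is not determined by $x$ for a commuting $p'$-element $y$; only a congruence modulo primes over divisors of $o(y)$ holds. For instance, for the rational $2$-special character $\psi$ of degree $2$ of $\SL(2,3)$, $\psi(z)=-2$ but $\psi(zy)=1$, with $z$ central of order $2$ and $y$ of order $3$. The fallback $g=x_p$ only yields $\Q(\chi(g))=\Q(\chi_p)$, which is a proper subfield as soon as two primes contribute.

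Even your single-prime step is flawed. Subfields of a cyclic extension do not form a chain unless its degree is a prime power: $\Q_9$ contains $\Q(\sqrt{-3})$ and the real cubic subfield, which are incomparable. So choosing $x$ with $\Q(\chi_p(x))$ of maximal degree does not give $\Q(\chi_p)$. The chain that does exist is the chain of conductors $\Q_{p^b}$ (Lemma 2.1), and it only controls the $p$-part of the degree; this is why Proposition 2.1 uses two different elements $g$ and $h$.

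What cyclicity actually gives is that every proper subfield is fixed by one of the unique subgroups of prime order. The paper exploits this non-constructively. Let $X$ consist of one element of each prime order in each factor $\Gal(\Q_{p^{a_p}})$, together with $\tau_2$ when $4\mid n$. The generalized character $f_X(\chi)=\sum_{A\subseteq X}(-1)^{|A|}\chi^{\prod_{\sigma\in A}\sigma}$ is nonzero, because $\Q(\chi)=\Q_n$ makes these conjugates distinct. If no $g$ works, $f_X(\chi)$ vanishes wherever some $\sigma_p$ fixes $\chi(g)$. In the magic case, $f_{\tau_q}f_{\tau_r}(\chi(g))=4\chi(g)$ makes $f_X(\chi(g))$ equal to $4$ times an algebraic integer. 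Orthogonality then forces an algebraic integer that cannot be one. In the odd-order case (after passing to a Hall $2'$-subgroup) this is $|G|/4$. In the odd-degree case it is $2|N|/4$, obtained from coset sums over a generator $xN$ of $G/N$, where $N=\ker\chi_2$. This divisibility-by-$4$ argument, which disposes of the sign obstruction, is the idea your proposal lacks.
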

In particular, the conclusion of this theorem holds for all fully factorizable characters of odd-order groups. It appears though that our assumption that either $\chi(1)$ is odd or $\chi$ is 2-rational cannot be readily relaxed as there seem to be numerous 2-groups possessing irreducible characters serving as counterexamples with the smallest such group having order 32.

We also remark that there are examples of primitive characters of nonsolvable groups for which the conclusion of Theorem A fails to hold, as demonstrated by Thiel. For instance, one can consider $Co_2 \times C_4$ where $Co_2$ is the second Conway group and $C_4$ is the cyclic group of order four. We take $\lambda$ to be a generator of $\Irr(C_4)$ and $\psi \in \Irr(Co_2)$ to be a character of degree 10,395 corresponding to either $\chi_{22}$ or $\chi_{23}$ in GAP \cite{Gap}. One can verify that $\psi \times \lambda \in \Irr(Co_2 \times C_4)$ is primitive and that the conclusion of Theorem A fails to hold for this character. Indeed, $\Q(\psi \times \lambda) = \Q(\sqrt{7},i)$, which has degree 4 over $\Q$. Yet, the field generated by any irrational value of $\psi \times \lambda$ is one of either $\Q(i)$, $\Q(\sqrt{7})$, or $\Q(i\sqrt{7})$, which each have degree 2 over $\Q$. 

Nonetheless, the conclusion of Theorem A is known to hold for the irreducible characters of certain families of nonsolvable groups by the work of the author and N. N. Hung, who showed in \cite{HH} that the conclusion holds for the Suzuki 2-groups and the groups $\SL(2,q)$ where $q$ is a prime power. One can also verify that the conclusion holds for the irreducible characters of each of the sporadic simple groups.  

Our proof for Theorem A is inspired by the proof of the solvable case of Feit's Conjecture provided in \cite[Theorem 2.22]{Is2}. We state Feit's conjecture \cite{F80} for reference:

\begin{conjecture}
	Let $G$ be a group, and let $\chi \in \Irr(G)$. There exists $g \in G$ having order $c(\chi)$.
\end{conjecture}

The connection between Navarro's problem and the Feit conjecture is not coincidental. Specifically, whenever the conclusion of Theorem A holds for an irreducible character $\chi$ of some group $G$, then Feit's conjecture will hold for that character as a result. To see this, assume there exists $g \in G$ such that $\Q(\chi(g)) = \Q(\chi)$. Clearly, $c(\chi(g)) = c(\chi)$. Now, if $\mathfrak{X}$ is a representation affording $\chi$, then the order of $g$ is the least common multiple of the orders of the eigenvalues of $\mathfrak{X}(g)$. Since $\chi(g)$ lies in the field generated by these eigenvalues, it follows that $\Q_{c(\chi)} = \Q_{c(\chi(g))} \subseteq \Q_{o(g)}$. We conclude that $c(\chi)$ divides the order of $g$, and the claim follows.

\section{Preliminary Lemmas}

Given a prime $p$ and two integers $a$ and $b$ where $a < b$, we have the following containment: $\Q_{p^a}\subseteq \Q_{p^b}$. In particular, the cyclotomic fields of the form $\Q_{p^a}$ are linearly ordered by containment. Consequently, given a character $\chi$ of a group $G$ having a prime-power conductor, there exists an element $g \in G$ such that the conductor of $\chi(g)$ equals the conductor of $\chi$ itself. We state this as a lemma for future use.
\begin{lemma}\label{L:2:1}
	Let $\chi$ be a character of a group $G$, and assume $c(\chi) = p^a$ for some prime $p$ an integer $a$. There exists $g \in G$ such that $c(\chi(g)) = c(\chi)$.
\end{lemma}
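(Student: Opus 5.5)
The plan is to use the fact that $\Q(\chi)$ is the compositum of the finitely many fields $\Q(\chi(g))$ for $g \in G$, together with the linear ordering of the prime-power cyclotomic fields by containment.

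\emph{Step 1: each value has prime-power conductor.} For every $g \in G$ we have $\Q(\chi(g)) \subseteq \Q(\chi) \subseteq \Q_{p^a}$. So $c(\chi(g))$ divides $p^a$, because the smallest cyclotomic field containing $\Q(\chi(g))$ sits inside $\Q_{p^a}$ and conductors behave well under intersection. Writing $c(\chi(g)) = p^{a_g}$, we get $a_g \le a$.

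\emph{Step 2: choose $g$ with the largest exponent.} Since $G$ is finite, pick $g_0$ with $a_{g_0}$ maximal. Each $\chi(g)$ then lies in $\Q_{p^{a_g}} \subseteq \Q_{p^{a_{g_0}}}$, by the containment $\Q_{p^b} \subseteq \Q_{p^c}$ for $b \le c$ noted before the lemma. The field $\Q(\chi)$ is generated by all the $\chi(g)$, so $\Q(\chi) \subseteq \Q_{p^{a_{g_0}}}$.

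\emph{Step 3: conclude.} Minimality of the conductor gives $p^a = c(\chi) \le p^{a_{g_0}}$, so $a \le a_{g_0}$. Combined with Step 1 this gives $a_{g_0} = a$, that is, $c(\chi(g_0)) = c(\chi)$.

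\emph{Main obstacle.} The only delicate point is the divisibility in Step 1: that $\Q(\alpha) \subseteq \Q_m$ forces $c(\alpha) \mid m$. This uses the standard fact $\Q_m \cap \Q_n = \Q_{\gcd(m,n)}$, which I will simply cite.

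Alternatively, one can avoid divisibility altogether. If $\Q(\chi(g)) \subseteq \Q_m$ for some $m$ with $p^a \mid m$ and $\Q(\chi(g)) \subseteq \Q_{p^a}$, then $\Q(\chi(g)) \subseteq \Q_{p^a} \cap \Q_{m'}$, where $m'$ is the minimal such modulus; intersecting again lands in a prime-power cyclotomic field. Hence the conductor is of the form $p^{b}$ in any case.

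A degenerate case also needs a remark: $a = 0$, or $a = 1$ with $p = 2$, where $\Q_{p^a} = \Q$. There $\chi$ is rational, so any $g$ works, since every value has conductor $1 = c(\chi)$.
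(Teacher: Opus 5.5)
Your proof is correct and takes the same route as the paper. The paper justifies the lemma only by the remark preceding it, namely that the fields $\Q_{p^b}$ are linearly ordered by containment, so the value with the largest $p$-power conductor generates a field containing all values of $\chi$; your Steps 1--3 spell this out, including the point the paper leaves implicit, that each $c(\chi(g))$ is itself a power of $p$ because $\Q_m\cap\Q_n=\Q_{\gcd(m,n)}$.
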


We will now review some basic facts about $p$-special characters. Proofs for each of the stated facts can be found in Chapter 2 of \cite{Is2}. Let $\chi$ be a fully factorizable character of a solvable group $G$, and let $\pi := \pi(G)$ be the set of prime divisors of $|G|$. For each $p \in \pi(G)$, we write the factorization of $\chi$ as
$$
\chi = \prod_{p \in \pi(G)} \chi_p,
$$
where each $\chi_p \in \Irr(G)$ is $p$-special. The $\chi_p$ satisfy many nice properties. For each $p$, we have that $c(\chi_p) = p^a$ for some $a$. By Gajendragadkar's restriction theorem, the restriction of $\chi_p$ to any Sylow $p$-subgroup is irreducible, and in fact, restriction to a Sylow $p$-subgroup $P$ defines an injective map from the set of $p$-special characters of $G$ into $\Irr(P)$. The above factorization of $\chi$ is unique up to the order of the factors. The field of values of $\chi$ can also be nicely expressed in terms of the fields generated by its $p$-special factors. The lemma below is taken from Corollary 2.13 and Lemma 2.19 in \cite{Is2}, and we state this result here since we shall refer to it frequently.
\begin{lemma}\label{L:Is-stuff}
	Let $G$ be a group, and let $\chi \in \Irr(G)$ be fully factorizable with factorization
	$$
	\chi = \prod_{p} \chi_p,
	$$
	where $\chi_p \in \Irr(G)$ is $p$-special. We have
	$$
	\Q(\chi) = \Q(\{\chi_p(g) \; | \; g \in G, \; p \in \pi(G)\}),
	$$
	and if $Q$ is a Sylow $q$-subgroup of $G$, then
	$$
	\Q(\chi_q) = \Q( (\chi_q)_Q ).
	$$
\end{lemma}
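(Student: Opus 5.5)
The plan is to prove both equalities by Galois theory inside a cyclotomic field. Every field involved lies in $\Q_{|G|}$, which is abelian over $\Q$. So it suffices to show that any $\sigma \in \Gal(\Q_{|G|}/\Q)$ fixing the smaller field also fixes the larger one. In each equality one containment is trivial. For the first, $\chi(g) = \prod_p \chi_p(g)$ lies in $\Q(\{\chi_p(g)\})$. For the second, the values of $(\chi_q)_Q$ are values of $\chi_q$. So the work is the reverse containments, and both will follow from one rigidity principle: Galois conjugation preserves $p$-specialness, and $p$-special characters are pinned down by uniqueness of factorization or by restriction to a Sylow subgroup.

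\textbf{Preliminary step.} I will first show that if $\psi \in \Irr(G)$ is $p$-special and $\sigma$ is a Galois automorphism, then $\psi^\sigma$ is $p$-special. I expect this to be the main, though mild, obstacle, since it means unpacking the definition. Recall that $\psi$ is $p$-special when $\psi(1)$ is a power of $p$ and, for every subnormal subgroup $S$ of $G$ and every irreducible constituent $\theta$ of $\psi_S$, the linear character $\det\theta$ has $p$-power order. First, $\psi^\sigma(1) = \psi(1)$. Next, $(\psi^\sigma)_S = (\psi_S)^\sigma$, so the irreducible constituents of $(\psi^\sigma)_S$ are exactly the $\theta^\sigma$. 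Finally, $\det(\theta^\sigma) = (\det\theta)^\sigma$, which is a linear character of the same order as $\det\theta$. Hence $\psi^\sigma$ is $p$-special.

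\textbf{First equality.} Let $\sigma$ fix $\Q(\chi)$, so $\chi^\sigma = \chi$. Then
$$
\chi = \chi^\sigma = \prod_p \chi_p^\sigma ,
$$
and by the preliminary step each $\chi_p^\sigma$ is an irreducible $p$-special character. This is therefore a factorization of $\chi$ into $p$-special factors, one for each prime. By uniqueness of the factorization, and since distinct primes label the factors, $\chi_p^\sigma = \chi_p$ for every $p$. Thus $\sigma$ fixes every value $\chi_p(g)$, which gives $\Q(\{\chi_p(g)\}) \subseteq \Q(\chi)$.

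\textbf{Second equality.} Let $\sigma$ fix $\Q((\chi_q)_Q)$, so $((\chi_q)_Q)^\sigma = (\chi_q)_Q$. Restriction commutes with $\sigma$, so $(\chi_q^\sigma)_Q = (\chi_q)_Q$. By the preliminary step $\chi_q^\sigma$ is $q$-special. By Gajendragadkar's restriction theorem, restriction to $Q$ is injective on $q$-special characters, so $\chi_q^\sigma = \chi_q$. Hence $\Q(\chi_q) \subseteq \Q((\chi_q)_Q)$, and equality follows.
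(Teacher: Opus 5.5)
Your proof is correct and is essentially the standard argument. The paper gives no proof of its own; it cites Corollary 2.13 and Lemma 2.19 of \cite{Is2}, whose proofs rest on exactly your three ingredients: Galois conjugates of $p$-special characters are $p$-special, the factorization into $p$-special factors is unique, and restriction to a Sylow subgroup is injective on $q$-special characters by Gajendragadkar's theorem. The one step you leave implicit, $\det(\theta^\sigma) = (\det\theta)^\sigma$, follows at once by extending $\sigma$ to an automorphism of $\C$ and applying it to a representation affording $\theta$.
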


We are now able to prove the following proposition.

\begin{proposition}\label{P:2:1}
	Let $\psi \in \Irr(G)$ be $p$-special for an odd prime $p$. Then, $\Q(\psi) = \Q_{c(\psi)}$. In particular, if $\chi$ is a fully factorizable character of a solvable group and $\chi$ either has odd degree or is 2-rational, then $\Q(\chi) = \Q_{c(\chi)}$.
\end{proposition}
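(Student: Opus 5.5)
The plan is to reduce the first assertion to a statement about characters of a Sylow $p$-subgroup and then use the cyclic structure of $\Gal(\Q_{p^n}/\Q)$ for odd $p$. Let $P\in\Syl_p(G)$ and $\theta=\psi_P$. By Gajendragadkar's theorem $\theta\in\Irr(P)$, and by Lemma~\ref{L:Is-stuff}, $\Q(\psi)=\Q(\theta)$. If $\theta=1_P$, then $\Q(\psi)=\Q=\Q_1=\Q_{c(\psi)}$ and there is nothing to prove. So assume $\theta\neq 1_P$, let $|P|=p^n$, and note that $\Q(\theta)\subseteq\Q_{p^n}$. Write $\mathcal{G}=\Gal(\Q_{p^n}/\Q)\cong(\mathbb{Z}/p^n)^\times$. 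Since $p$ is odd, $\mathcal{G}$ is cyclic of order $(p-1)p^{n-1}$.

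The key step is to show that no nontrivial element of $\mathcal{G}$ of $p'$-order fixes $\theta$. Let $\sigma\in\mathcal{G}$ have order $m>1$ with $m\mid p-1$, so $\sigma$ corresponds to some $k$ of order $m$ modulo $p^n$. Then $\sigma$ acts on $\Irr(P)$, and it acts on the conjugacy classes of $P$ via the power map $g\mapsto g^k$. By Brauer's permutation lemma, the number of $\sigma$-fixed irreducible characters equals the number of $\sigma$-fixed classes.

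Suppose $g\neq 1$ and $g^k$ is conjugate to $g$, say $g^x=g^k$. Then $x$ normalizes $\la g\ra$ and induces the automorphism "raise to the $k$th power". Since $x$ lies in a $p$-group, this automorphism has $p$-power order. On the other hand, $k$ has order divisible by $m$ modulo $o(g)$: the reduction map $(\mathbb{Z}/p^n)^\times\to(\mathbb{Z}/o(g))^\times$ is injective on the $p'$-part, because $o(g)\geq p$. This is a contradiction. Hence only the trivial class, and therefore only $1_P$, is $\sigma$-fixed, and so $\theta^\sigma\neq\theta$.

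Consequently $\Gal(\Q_{p^n}/\Q(\theta))$ is a $p$-subgroup of the cyclic group $\mathcal{G}$. The subgroups of $\mathcal{G}$ of $p$-power order are exactly the groups $\Gal(\Q_{p^n}/\Q_{p^b})$ with $1\le b\le n$, so $\Q(\theta)=\Q_{p^b}$ for some $b\geq 1$. It follows that $c(\psi)=p^b$ and $\Q(\psi)=\Q_{c(\psi)}$. I expect this Galois-fixed-point argument to be the main point; the remaining steps are bookkeeping.

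For the second assertion, write $\chi=\prod_p\chi_p$. I will first show that $\Q(\chi_2)=\Q_{2^a}$ for some $a$.
\begin{enumerate}[(i)]
\item If $\chi(1)$ is odd, then $\chi_2(1)$, being a power of $2$ dividing $\chi(1)$, equals $1$. So $\chi_2$ is linear, its values are $2$-power roots of unity, and $\Q(\chi_2)$ is of the form $\Q_{2^a}$.
\item If $\chi$ is $2$-rational, then $c(\chi)$ is odd. By Lemma~\ref{L:Is-stuff}, $\Q(\chi_2)\subseteq\Q(\chi)\subseteq\Q_{c(\chi)}$, and also $\Q(\chi_2)\subseteq\Q_{2^a}$. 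Since $\Q_{c(\chi)}\cap\Q_{2^a}=\Q$, we get $\Q(\chi_2)=\Q$.
\end{enumerate}
For each odd $p$, the first part gives $\Q(\chi_p)=\Q_{p^{b_p}}$. By Lemma~\ref{L:Is-stuff}, $\Q(\chi)$ is the compositum of these fields, which is $\Q_N$ with $N=2^a\prod_{p\text{ odd}}p^{b_p}$. Hence $c(\chi)=N$ and $\Q(\chi)=\Q_{c(\chi)}$.
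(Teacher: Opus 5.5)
Your proof is correct, but it reaches the key point by a different route than the paper.

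\textbf{The paper's route.} It passes to $G/\ker\psi$ so that $\psi$ is faithful, and uses Lemma~\ref{L:2:1} to find $g$ with $c(\psi(g))=p^a$, so $p^{a-1}$ divides $|\Q(\psi(g)):\Q|$. It then uses irreducibility of $\psi_P$ to get $\Cent(P)\subseteq\Cent(\psi)=\Cent(G)$. This yields a central $h$ of order $p$ with $\psi(h)=\zeta_p\psi(1)$, hence $\Q_p\subseteq\Q(\psi)$. The divisibilities by $p^{a-1}$ and by $p-1$ then force $\Q(\psi)=\Q_{p^a}$.

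\textbf{Your route.} You move everything to $\theta=\psi_P$ via Lemma~\ref{L:Is-stuff}. Using the Galois form of Brauer's permutation lemma and the power-map argument on classes, you show that no nontrivial $p'$-element of $\Gal(\Q_{p^n}/\Q)$ fixes a nontrivial $\theta\in\Irr(P)$. This is equivalent to $\Q_p\subseteq\Q(\theta)$. Indeed, $\Gal(\Q_{p^n}/\Q(\theta))$ is a $p$-group exactly when it lies in the Sylow $p$-subgroup $\Gal(\Q_{p^n}/\Q_p)$ of the cyclic group. So your key step and the paper's central-element step establish the same fact by different means.

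\textbf{What each buys.} Your route gives a clean statement about $p$-groups: $\Q(\theta)=\Q_{c(\theta)}$ for every $\theta\in\Irr(P)$ when $p$ is odd. It needs neither the faithfulness reduction nor Lemma~\ref{L:2:1}, at the cost of invoking Brauer's lemma. The paper's route is more elementary.

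\textbf{A shorter combined proof.} View $\theta$ as a faithful character of $P/\ker\theta$ and apply the center argument to get $\Q_p\subseteq\Q(\theta)\subseteq\Q_{p^n}$. Every field between these is some $\Q_{p^b}$, because $\Gal(\Q_{p^n}/\Q_p)$ is cyclic of order $p^{n-1}$. This makes both Brauer's lemma and Lemma~\ref{L:2:1} unnecessary.

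\textbf{A harmless slip in your second part.} If $\chi_2$ takes only the values $\pm1$ and you choose $a=1$, then $c(\chi)=N/2$ rather than $N$. Since $\Q_N=\Q_{N/2}$ in that case, the conclusion $\Q(\chi)=\Q_{c(\chi)}$ still holds.
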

\begin{proof}
	Let $\psi \in \Irr(G)$ be $p$-special for an odd prime $p$. By replacing $G$ with $G/\ker\psi$, it is of no loss to assume that $\psi$ is faithful. We have that $c(\psi)$ is an integer power of $p$, say $p^a$. Of course, if $\psi$ is rational valued, there is nothing to prove, so we may assume otherwise. 
	
	Now, $|\Q_{c(\psi)} : \Q| = (p-1)p^{a-1}$. There exists $g \in G$ such that $p^{a-1} \mid |\Q(\chi(g)) : \Q|$ by Lemma \ref{L:2:1}. On the other hand, let $P$ be a Sylow $p$-subgroup of $G$. Since $\psi_P$ is irreducible, we have $\Cent(P) \subseteq \Cent(\psi) = \Cent(G)$. Thus, there exists $h \in G$ such that $\psi(h) = \zeta_p\psi(1)$. Now, one verifies that $\Q(\psi) \supseteq \Q(\psi(g),\psi(h)) = \Q_{c(\psi)}$, and so $\Q(\psi) = \Q_{c(\psi)}$.
	
	For the second claim, let $\chi$ be a fully factorizable character of $G$, and assume that $\chi(1)$ is odd or that $\chi$ is 2-rational. We factorize $\chi$ as a product of $p$-special characters:
	$$
	\chi = \prod_{p \in \pi(G)} \chi_p.
	$$
	By Lemma \ref{L:Is-stuff}, if $\chi$ is 2-rational, then $\chi_2$ is rational valued. Otherwise, if $\chi$ has odd degree, then $\chi_2$ is linear. In either case, $\Q(\chi_2) = \Q_{2^a}$ for some nonnegative integer $a$. In particular, the values of any $\chi_p$ generate a full cyclotomic field, and the result holds by Lemma \ref{L:Is-stuff}.
\end{proof}

We remark that the first statement in the above proposition fails to hold when $p = 2$. Taking the quaternion group of order 16 for instance, the two nonlinear, irrational characters each generate the subfield of $\Q_8$ fixed by complex conjugation. Also, for any irreducible character $\chi$ of an arbitrary solvable group, Cram has given a very short proof in \cite{Cram} that $|\Q_{c(\chi)} : \Q(\chi)|$ divides $\chi(1)$. For an irreducible character $\chi$ of a general finite group, it was conjectured in \cite{HT} that $|\Q_{c(\chi)} : \Q(\chi)| \leq \chi(1)$, and this conjecture remains open.

To state our next lemma, we  require the notion of a ``magic field automorphism'' of $\Gal(\Q_n)$, which Isaacs defines in Section 2C of \cite{Is2} as a certain type of automorphism of $\Q_n$ for an odd integer $n$. For our purposes though, it will be  convenient to extend this definition to even integers. First, recall that for an integer $n$ and a set of primes $\pi$, we may write $\Q_n = \Q_{n_\pi}\Q_{n_{\pi'}}$ where $\Q_{n_\pi} \cap \Q_{n_{\pi'}} = \Q$. Thus, defining a Galois automorphism on $\Q_{n_\pi}$ and $\Q_{n_{\pi'}}$ amounts to defining a Galois automorphism on $\Q_n$.
\begin{definition}
	Let $n$ be an integer, and let $\pi$ be a set of primes. We may write $\Q_n = \Q_{n_\pi}\Q_{n_{\pi'}}$. A \emph{magic field automorphism} $\tau_\pi \in \Gal(\Q_n)$ is the automorphism defined by letting $\tau_\pi$ act as the trivial automorphism on $\Q_{n_{\pi'}}$ and as complex conjugation on $\Q_{n_{\pi - \{2\}}}$. Further, if $2 \in \pi$ and $4 \mid n$, then $\tau_\pi$ acts on $\Q_{n_2}$ by sending $\zeta_{n_2}$ to $-\zeta_{n_2}$. If $\pi = \{p\}$ for some prime $p$, then we simply write $\tau_{\{p\}}$ as $\tau_p$. 
\end{definition} 
If $n$ is odd or divisible by 4, then it is clear that the subgroup of magic field automorphisms in $\Gal(\Q_n)$ is an elementary abelian 2-group whose rank is the number of prime divisors of $n$.

We introduce yet more notation. Let $n$ be an integer with prime factorization $n = \prod_{p \mid n} p^{a_p}$ for $a_p \geq 1$. The Galois group $\Gal(\Q_n)$ decomposes as a direct product: $$\Gal(\Q_n) = \prod_{p \mid n} \Gal(\Q_{p^{a_p}}).$$ Specifically, given $\sigma \in \Gal(\Q_n)$, we may uniquely write $\sigma= \prod_{p \mid n}\sigma_p$, where $\sigma_p$ fixes $\Q_{q^{a_q}}$ for $q \neq p$.

\begin{lemma}\label{L:2:3}
	Let $n$ be an integer with prime factorization $n = \prod_{p \mid n} p^{a_p}$ for $a_p \geq 1$, and for each $p$ dividing $n$, let $\alpha_p \in \Q_{p^{a_p}}$.
	Further assume that if $n$ is even, then $\alpha_2$ is a root of unity. Set $\alpha = \prod_{p \mid n} \alpha_p$, and assume there exists a nonidentity Galois automorphism $\sigma \in \Gal(\Q_n/\Q(\alpha))$. Either of the following occur:
	\begin{itemize}
		\item[(a).] $\alpha$ is fixed by some $\sigma_p$. Further, if $n$ is even and $\alpha$ is fixed by $\sigma_2$, then we may assume $\sigma_2$ is the automorphism of $\Gal(\Q_{2^{a_2}})$ sending $\zeta_{2^{a_2}}$ to $-
		\zeta_{2^{a_2}}$.
		\item[(b).] $\sigma$ is a magic field automorphism with respect to some set $\pi$. Here, $|\pi|$ is even, and for each $p \in \pi$, we have $\alpha_p^{\tau_\pi} = -\alpha_p$. 
	\end{itemize}
	
\end{lemma}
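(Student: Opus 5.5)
The plan is to reduce the single equation $\alpha^\sigma=\alpha$ to one equation for each prime, using the fact that the fields $\Q_{p^{a_p}}$ are linearly disjoint. If some $\alpha_p=0$ then $\alpha=0$, and every nonidentity $\sigma_p$ fixes $\alpha$; so we are in (a), where for $p=2$ we choose $\sigma_2$ to be $\zeta_{2^{a_2}}\mapsto-\zeta_{2^{a_2}}$. Assume from now on that every $\alpha_p\neq 0$. Write $\sigma=\prod_p\sigma_p$ and set $\beta_p=\alpha_p^{\sigma_p}/\alpha_p\in\Q_{p^{a_p}}$. Since $\sigma_q$ fixes $\Q_{p^{a_p}}$ for $q\ne p$, we get $\alpha^\sigma=\prod_p\alpha_p^{\sigma_p}$, so $\alpha^\sigma=\alpha$ becomes $\prod_p\beta_p=1$. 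Then $\beta_p=\prod_{q\neq p}\beta_q^{-1}$ lies in $\Q_{p^{a_p}}\cap\Q_{n/p^{a_p}}=\Q$. Conjugate elements have equal norms, so $N_{\Q_{p^{a_p}}/\Q}(\beta_p)=1$. Hence $\beta_p^{d}=1$ for $d=|\Q_{p^{a_p}}:\Q|$, and as $\beta_p$ is rational, $\beta_p=\pm1$.

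Let $S=\{p : \sigma_p\neq 1\}$, which is nonempty. For $p\notin S$ we have $\beta_p=1$. If some $p\in S$ has $\beta_p=1$, then $\alpha^{\sigma_p}=\alpha$ with $\sigma_p$ nonidentity, and we are in (a). In the remaining case $\beta_p=-1$ for every $p\in S$, so $\prod_p\beta_p=1$ forces $|S|$ to be even. What is left is to show that either (a) still holds (via a different automorphism) or each $\sigma_p$ with $p\in S$ is $\tau_p$, so that $\sigma=\tau_S$ and $\alpha_p^{\tau_S}=-\alpha_p$.

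I expect this identification of $\sigma_p$ to be the main obstacle. The statement must be read as allowing $\sigma$ to be replaced by another nonidentity element of $\Gal(\Q_n/\Q(\alpha))$, or an automorphism such as $\sigma_p$ to be replaced by another component automorphism fixing $\alpha$. Raising $\sigma$ to an odd power keeps every $\beta_p=-1$. So for odd $p$ we may assume $\sigma_p$ has $2$-power order in the cyclic group $\Gal(\Q_{p^{a_p}})$.
\begin{itemize}
\item If that order is $2$, then $\sigma_p$ is the unique involution, namely complex conjugation, which is $\tau_p$.
\item If the order is $2^e$ with $e\ge2$, then complex conjugation $c=\sigma_p^{2^{e-1}}$ is an even power of $\sigma_p$. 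Since $\sigma_p^2$ fixes $\alpha_p$, so does $c$, and $c$ is a nonidentity component automorphism fixing $\alpha$, which gives (a).
\end{itemize}

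For $p=2$, note that $\alpha_2$ is a root of unity in $\Q_{2^{a}}$ with $a=a_2$, so $\Q(\alpha_2)=\Q_{2^b}$ for some $b\le a$. Whenever $b<a$, the map $\zeta_{2^a}\mapsto-\zeta_{2^a}$ fixes $\Q_{2^{a-1}}\supseteq\Q(\alpha_2)$, hence fixes $\alpha$, and we are in (a) with exactly the prescribed $\sigma_2$. This also covers the case where $\sigma_2$ fixes $\alpha$. If instead $b=a$ and $\alpha_2^{\sigma_2}=-\alpha_2$, then $a\ge2$, so $4\mid n$. In that case $\sigma_2$ negates a primitive $2^a$-th root of unity, so $\sigma_2$ is $\zeta_{2^a}\mapsto-\zeta_{2^a}$, which is $\tau_2$.

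Combining the cases, either (a) holds or $\sigma=\tau_\pi$ with $\pi=S$ of even size and $\alpha_p^{\tau_\pi}=-\alpha_p$ for each $p\in\pi$, which is (b).
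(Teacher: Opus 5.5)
\textbf{There is a real but easily repaired gap: the odd-power replacement of $\sigma$.}

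Your derivation that $\beta_p=\alpha_p^{\sigma_p}/\alpha_p$ is rational and equal to $\pm1$ is correct. The linear-disjointness and norm argument makes explicit what the paper calls straightforward. Your $p=2$ analysis also fills in a case the paper leaves implicit.

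The problem is that you claim the statement must be read as allowing $\sigma$ to be replaced by another nonidentity element of $\Gal(\Q_n/\Q(\alpha))$, and then pass to $\sigma^m$ with $m$ odd. What you finally prove is therefore that (a) holds or $\sigma^m=\tau_S$, not that $\sigma=\tau_S$. This is strictly weaker than (b), and it is not enough for how the lemma is used. In the proof of Theorem A one is given a specific non-magic automorphism fixing $\chi(g)$ and must conclude (a). Knowing that some odd power of it is magic yields nothing. Nor is the weakening forced. Only the loose reading of (a) is needed, namely that some nonidentity automorphism supported on a single factor $\Q_{p^{a_p}}$ fixes $\alpha$, and the paper uses that reading too.

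\textbf{The fix.} It uses something you already observed in your second bullet. Since $\beta_p^2=1$, the component $\sigma_p^2$ fixes $\alpha_p$, hence fixes $\alpha$. So for each odd $p\in S$ there are two cases:
\begin{itemize}
\item[(i)] $\sigma_p^2\neq 1$: this nonidentity component automorphism fixes $\alpha$, which is (a).
\item[(ii)] $\sigma_p^2=1$: then $\sigma_p$ is an involution of the cyclic group $\Gal(\Q_{p^{a_p}})$, hence complex conjugation, i.e.\ $\tau_p$.
\end{itemize}
This applies to the original $\sigma$ with no passage to powers.

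It is essentially the paper's route. The paper first disposes of the case $\Q(\alpha_p)\subsetneq\Q_{p^{a_p}}$ via (a). After that, $\sigma_p^2$ fixing $\alpha_p$ forces $\sigma_p^2=1$. Your $p=2$ argument already works for $\sigma$ itself, so with this change your proof gives the lemma as stated.
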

\begin{proof}
	Keeping the above notation, if $\alpha_p$ is fixed by any nontrivial Galois automorphism of $\Q_{p^{a_p}}$, say $\tau$, then $\tau$ extends to an automorphism of $\Q_n$ which leaves $\Q_{q^{a_q}}$ fixed for $q \neq p$. Thus, $\tau$ fixes $\alpha$, and we can assume from now on that $\Q(\alpha_p) = \Q_{p^{a_p}}$ for each $p$ dividing $n$. Also, notice that in the case where $p = 2$ here, since $\alpha_2$ is a root of unity, the fact that $\tau$ fixes $\alpha_2$ implies that $c(\alpha_2) < 2^{a_2}$, meaning that the Galois automorphism of $\Gal(\Q_{2^{a_2}})$ sending $\zeta_{2^{a_2}}$ to $-
	\zeta_{2^{a_2}}$ must fix $\alpha_2$. 
	
	If $\sigma \in \Gal(\Q_n/\Q(\alpha))$ is a nonidentity automorphism, we have
	$$
	\prod_{p \mid n} \alpha_p^{\sigma_p} = \alpha^\sigma =  \alpha = \prod_{p\mid n} \alpha_p,
	$$
	and so
	\begin{equation}\label{rat-prod}  
		1 = \prod_{p \mid n} \dfrac{\alpha_p^{\sigma_p}}{\alpha_p}.
	\end{equation} 
	Now, it is straightforward to verify using the
	Galois Correspondence that each quotient in the above
	product must be rational. Thus, for each $p$,
	there exists a rational number $r_p$ such that $\alpha_p^{\sigma_p} = r_p\alpha_p$.  If
	$\sigma_p$ has order $\ell$, then $\alpha_p = \alpha_p^{\sigma_p^\ell} = r_p^\ell\alpha_p$. We conclude that $r_p$ is a rational
	root of unity and is therefore either 1 or $-1$. 
	
	Now, since we are assuming $\sigma$ is a nonidentity automorphism, at least one $\sigma_p$ is a nonidentity automorphism. If there exists a $p$ for which $\sigma_p$ is nontrivial and $r_p = 1$, then $\alpha$ is fixed by $\sigma_p$. Thus, from now on, we may assume that for each $p$ either $\sigma_p$ is trivial or $r_p = -1$. 
	
	If $p$ is odd, we claim that if $r_p = -1$, then $\sigma_p$ acts as complex conjugation on $\Q_{p^{a_p}}$. Indeed, $\alpha_p^{\sigma_p} = -\alpha_p$, and so $\sigma_p$ acts as an involution on $\Q(\alpha_p) = \Q_{p^{a_p}}$. When $p$ is odd, $\Gal(\Q_{p^{a_p}})$ has one involution, namely complex conjugation. Thus, for each odd $p$ where $r_p = -1$, $\sigma$ acts as complex conjugation on $\Gal(\Q_{p^{a_p}})$, and $\sigma$ acts trivially for all $p$ where $r_p = 1$. We conclude that $\sigma$ is a magic field automorphism with respect to $\pi$ where $\pi$ is the set of all $p$ such that $r_p = -1$. The fact that $|\pi|$ is even now follows from the fact that $r_p$ takes values in $\{-1,1\}$ and the fact that \eqref{rat-prod} becomes
	$$
	1 = \prod_{p \mid n} \dfrac{\alpha_p^{\sigma_p}}{\alpha_p} =  \prod_{p \mid n} \dfrac{r_p\alpha_p}{\alpha_p} = \prod_{p \mid n} r_p.
	$$
\end{proof}

\section{Proof of the Main Theorem}

Let $n$ be an integer, and let $\sigma \in \Gal(\Q_n)$. We define the $\Q$-linear function $f_{\sigma}: \Q_n \to \Q_n$ by setting $f_\sigma(\alpha) = \alpha - \alpha^\sigma$. Since the Galois group is abelian, it is straightforward to see that the functions are commutative under composition, i.e., $f_\sigma \circ f_\tau = f_\tau \circ f_\sigma$ for $\sigma, \tau \in \Gal(\Q_n)$. 

Also, let $X \subseteq \Gal(\Q_n)$, and let $f_X$ denote the composition of all $f_\sigma$ for $\sigma \in X$. The commutativity of the $f_\sigma$ under composition implies that $f_X$ is well defined.  As a convention, if $X$ is empty, we take $f_X$ to be the identity automorphism. We will require the following facts, which we state as a lemma.
\begin{lemma}
	Let $n$ be an integer, and let $X \subseteq \Gal(\Q_n)$. The following hold:
	\begin{itemize}
		\item[(a).] If $\alpha$ lies in a subfield of $\Q_n$ fixed by any element of $X$, then $f_X(\alpha) = 0$.
		\item[(b).] For any $\alpha \in \Q_n$, we have
		$$
		f_X(\alpha) = \sum_{i=0}^{|X|} (-1)^i\sum_{A \in \mathscr{X}_i}\alpha^{\prod_{\sigma \in A}\sigma}, 
		$$
		where $\mathscr{X}_i$ is the set of all size-$i$ subsets of $X$. (Vacuous products are of course taken to be the identity.)
	\end{itemize}
\end{lemma}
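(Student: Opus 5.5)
Both parts follow from the basic properties of the maps $f_\sigma$: each is $\Q$-linear, they commute, and $f_\sigma(\beta)=0$ exactly when $\beta$ is fixed by $\sigma$. For (a), let $\alpha$ be fixed by some $\tau \in X$. By commutativity we may write $f_X = f_{X-\{\tau\}} \circ f_\tau$. Then $f_\tau(\alpha) = \alpha - \alpha^\tau = 0$. Since $f_{X-\{\tau\}}$ is $\Q$-linear, it sends $0$ to $0$, so $f_X(\alpha)=0$. The only point to note is that commutativity lets us move $f_\tau$ to act first, so that it is applied to $\alpha$ itself.

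For (b), I would induct on $|X|$. If $X$ is empty, $f_X$ is the identity. The right-hand side then has only the $i=0$ term, which is $\alpha$ raised to the empty product, i.e.\ $\alpha$. So the two sides agree.

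For the inductive step, choose $\tau \in X$ and set $Y = X - \{\tau\}$, so that $f_X = f_\tau \circ f_Y$. By induction,
$$f_Y(\alpha) = \sum_{i=0}^{|Y|} (-1)^i \sum_{A \in \mathscr{Y}_i} \alpha^{\prod_{\sigma\in A}\sigma},$$
where $\mathscr{Y}_i$ is the set of size-$i$ subsets of $Y$. Apply $f_\tau(\beta) = \beta - \beta^\tau$, using that $\tau$ acts additively. The first part reproduces the sum over subsets $A\subseteq X$ with $\tau\notin A$. The second part equals
$$-\sum_i (-1)^i\sum_{A\in\mathscr{Y}_i}\alpha^{(\prod_{\sigma\in A}\sigma)\tau}.$$
This is exactly the sum over subsets $A\cup\{\tau\}$ of $X$, which have size $i+1$, each carrying the sign $(-1)^{i+1}$.

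Every subset of $X$ either omits $\tau$ or has the form $A\cup\{\tau\}$ with $A\subseteq Y$, and in exactly one way. So the two parts together give the claimed formula for $X$. The step needing care is that $\alpha^{\prod_{\sigma\in A}\sigma}$ is well defined independent of the order of the factors and that $(\alpha^{\rho})^\tau = \alpha^{\rho\tau}$; both hold because $\Gal(\Q_n)$ is abelian. Beyond this bookkeeping there is no real obstacle.
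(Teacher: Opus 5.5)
Your proof is correct and follows the paper's approach. Part (a) is the paper's argument of using commutativity to write $f_X = f_{X-\{\tau\}}\circ f_\tau$ and noting $f_\tau(\alpha)=0$. For (b), the paper only says the formula follows by induction on $|X|$, and your inductive step (splitting subsets of $X$ by whether they contain $\tau$) supplies those details correctly.
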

\begin{proof}
	For (a), if $\sigma \in X$ fixes $\alpha \in \Q_n$, then we may write 
	$$
	f_X(\alpha) = f_{X - \{\sigma\}}(f_\sigma(\alpha) ) = f_{X - \{\sigma\}}(0) = 0,
	$$
	from which the claim follows.
	
	The formula in part (b) follows easily from induction on the cardinality of $X$.
\end{proof}

We are now ready to prove Theorem A.

\begin{proof}[Proof of Theorem A]
	Let $G$ be solvable, and let $\chi \in \Irr(G)$ be fully factorizable, and assume either that $\chi(1)$ is odd or that $\chi$ is 2-rational. By assumption, $\chi$ is fully factorizable with factorization
	$$
	\chi = \prod_{p \in \pi} \chi_{p}
	$$
	for some set of primes $\pi$ and where $\chi_{p}$ is $p$-special for each prime $p \in \pi$. 
	
	Furthermore, in the case where $\chi$ is 2-rational, we have that $\chi_2$ is a rational-valued character by Lemma 2.2, and so, $\Q(\chi) = \Q(\prod_{p \neq 2} \chi_p)$, so we may assume that $\chi_2$ is the principal character. Then, the restriction of $\chi$ to a Hall $2'$-subgroup $H$ is irreducible by Gajendragadkar's restriction theorem, and by Lemma 2.2, we have that $\Q(\chi) = \Q(\chi_H)$. Replacing $G$ with $H$ and $\chi$ with $\chi_H$, it follows that in order to prove Theorem A in the case where $\chi$ is 2-rational, we need only prove the statement when $G$ is assumed to have odd order. 
	
	By Proposition \ref{P:2:1}, we have that $\Q(\chi) = \Q_n$ for some integer $n$ having prime factorization $n = \prod_{p \in \pi} p^{a_p}$ where $a_p \geq 1$ for each $p$. In particular, for any two distinct $\sigma, \tau \in \Gal(\Q_n)$, we have that $\chi^{\sigma} \neq \chi^\tau$.
	
	Now, $\Gal(\Q_n)$ decomposes as a direct product like so:
	$$
	\Gal(\Q_n) = \prod_{p \in \pi} \Gal(\Q_{p^{a_p}}).
	$$
	For odd primes $p$, we have $\Gal(\Q_{p^{a_p}}) \cong C_{p-1} \times C_{p^{a_p - 1}}$. For each odd prime $p \in \pi$ and each prime divisor $q$ of $\varphi(p^{a_p}) = (p-1)p^{a_p-1}$ choose an element $\sigma_{p,q}$ of order $q$ from $\Gal(\Q_{p^{a_p}})$. If $4 \mid n$, let $\sigma_2$ denote the automorphism of $\Q_{2^{a_2}}$ sending $\zeta_{2^{a_2}}$ to $-\zeta_{2^{a_2}}$ Let $X$ denote the set consisting of $\sigma_2$ (if $4 \mid n$) and all $\sigma_{p,q}$ for each odd $p \in \pi$ and $q$ dividing $(p-1)p^{a_p - 1}$. Since $\sigma_2$ and the $\sigma_{p,q}$ each lie in a unique direct factor of $\Gal(\Q_n)$, it follows that all automorphisms of the form
	$$
	\prod_{\sigma \in A} \sigma 
	$$
	for $A \subseteq X$ are distinct. (Again, vacuous products are taken to be the identity). Now, defining $f_X$ to be the composition of all $f_\sigma$ for $\sigma \in X$, it follows from the lemma above that
	\begin{equation}\label{E:expan}
		f_X(\chi) = \sum_{i=0}^{|X|} (-1)^i\sum_{A \in \mathscr{X}_i}\chi^{\prod_{\sigma \in A}\sigma}.
	\end{equation}
	Each term in this sum is a distinct irreducible character and the generalized character is therefore nonzero. 
	
	By Lemma \ref{L:2:3}, if $\chi(g)$ is fixed by some 
	Galois automorphism which is not a magic field automorphism for some $g \in G$, then $\chi(g)$ is 
	fixed by some Galois automorphism of the form $\sigma_p$ for $p \in \pi$ which leaves $\Q_{q^{a_q}}$ 
	fixed for all $q \neq p$. From our definition of $X$, it follows that $\chi(g)$ is fixed by some element of $X$, and we conclude that
	$f_X(\chi(g)) = 0$. Thus, $f_X(\chi)$ vanishes on all elements of $G$ except for the $g \in G$ where $\Gal(\Q_n/\Q(\chi(g)))$ 
	consists of magic field automorphisms with respect to sets having even cardinality.

	Assume that there exists no $g \in G$ such that $\Q(\chi(g)) = \Q(\chi)$. We now claim that $f_X(\chi(g))/4$ is an algebraic integer for each $g \in G$. 
	Let $g \in G$ be an element for which $f_X(\chi(g)) \neq 0$ so that $\chi(g)$ is fixed by a nontrivial magic field automorphism $\tau_\pi$ where $|\pi| \geq 2$. If $q \in \pi$, then $\chi_q(g)^{\tau_\pi} = -\chi_q(g)$. (Notice that this choice of $q$ depends upon $g$.) It follows that for distinct $q,r \in \pi$, we have
	$$
	f_{\tau_q}(f_{\tau_r}(\chi(g))) = 4\chi(g).
	$$ 
	One sees that by our construction of $X$, we have $\tau_q, \tau_r \in X$. Setting $Y = X - \{\tau_q, \tau_r\}$, we have
	\begin{equation}\label{E:fourinator}
		f_X(\chi(g)) = f_Y (f_{\tau_q}(f_{\tau_r}(\chi(g)))  ) = f_Y(4\chi(g)) = 4f_Y(\chi(g)).
	\end{equation}
	Set $\alpha_{g} = f_Y(\chi(g))$, and notice that $\alpha_g$ is an algebraic integer. 
	
	Now, assume that $\chi$ is 2-rational. By our remarks in the second paragraph of this proof, we can assume that $|G|$ is odd.
	By the orthogonality relations and the distinctness of the terms in \eqref{E:expan}, we have that
	$$
	\sum_{g \in G} f_X(\chi(g))\ol{\chi(g)}  = |G|[f_X(\chi),\chi] = |G|.
	$$
	On the other hand, we have that
	$$
	\sum_{g \in G} f_X(\chi(g))\ol{\chi(g)} = 4\sum_{\substack{g \in G, \\ f_X(\chi(g)) \neq 0}} \alpha_g \ol{\chi(g)}.
	$$
	Combining the above two equations, we obtain that $|G|/4$ is an algebraic integer, contradicting the assumption that $|G|$ is odd.
	
	Now, assume that $\chi$ has odd degree. We may assume now that the 2-special factor of $\chi$ is a nonprincipal linear character. Since we assume that $\chi$ is faithful, this implies that the restriction of $\chi_2$ to a Sylow 2-subgroup of $G$ is faithful, and it follows that the Sylow 2-subgroups of $G$ are cyclic and that $N := \ker\chi_2$ is a normal 2-complement in $G$. Notice also that $\chi_N$ is irreducible by Gajendragadkar's restriction theorem.  
	
	Now, choose a coset $xN$ of $G$ such that $\la xN \ra = G/N$. We seek to verify the three identities below:\footnote{We remark that these claims are analogous to Lemma 8.6 in \cite{Is2}.}
	\begin{equation}\label{E:identity-case}
		\sum_{g \in xN} \chi(g)\ol{\chi(g)} = |N|,
	\end{equation}
	\begin{equation}\label{E:zero-case}
		\sum_{g \in xN} \chi^\sigma(g)\ol{\chi(g)} = 0 \mbox{ for } \sigma \in \Gal(\Q_n) \mbox{ restricting nontrivially to } \Q_{n_{2'}}, \mbox{ and}
	\end{equation} 
	\begin{equation}\label{E:weird-case}
		\sum_{g \in xN} \chi^{\tau_2}(g)\ol{\chi(g)} = -|N|.
	\end{equation}
	We handle these three identities simultaneously. For $\sigma \in \Gal(\Q_n)$, define a class function $\Xi(hN)$ on $G/N$ by setting 
	$$\Xi(hN) = \dfrac{1}{|N|} \sum_{g \in hN} \chi^\sigma(g)\ol{\chi(g)}.$$
	We wish to evaluate the multiplicities of this class function's irreducible constituents. Let $\lambda \in \Irr(G/N)$. We compute
	$$
	[\Xi,\lambda] = \dfrac{1}{|G:N|} \sum_{hN \in G/N} \ol{\lambda(hN)} \dfrac{1}{|N|}\sum_{g \in hN} \chi^\sigma(g)\ol{\chi(g)} = \dfrac{1}{|G|}\sum_{g \in G} \chi^{\sigma}(g) \ol{\chi(g)\lambda(g)} = [\chi^\sigma,\chi\lambda].
	$$ 
	Thus, as $\chi^\sigma$ and $\chi\lambda$ are both irreducible, it follows that $[\Xi,\lambda] \neq 0$ iff $\chi^\sigma = \chi\lambda$. 
	
	If $\sigma$ is the identity automorphism, then since $\chi_N$ is irreducible, it follows from the Gallagher Correspondence that $\chi = \chi\lambda$ iff $\lambda$ is the principal character. We conclude that $\Xi$ is the principal character  of $G/N$, and \eqref{E:identity-case} now follows.
	
	Now, if the restriction of $\sigma$ to $\Q_{n_{2'}}$ is nontrivial, then by Lemma 2.2 and Proposition \ref{P:2:1}, we have that $\chi_N^\sigma \neq \chi_N = (\chi\lambda)_N$ for each $\lambda \in \Irr(G/N)$, and thus, $\Xi$ is identically zero in this case, from which the equality in \eqref{E:zero-case} follows. 
	
	Now, we consider the case where $\sigma = \tau_2$. We claim that $\chi$ is nonzero on a Sylow 2-subgroup $P$ of $G$. Indeed, if $\chi(z) = 0$ for some $z \in P$, then this would imply $\chi(1)$ is even by Lemma 2.24 in \cite{Is2}. Now, if $\chi^\sigma = \chi\lambda$, then $\lambda_P = (\chi^\sigma)_P / (\chi)_P = (\chi_2^\sigma)_P (\ol{\chi_2})_P$. Now, $\sigma$ sends primitive $|P|$th roots of unity their respective negatives and fixes the $|P|/2$th roots of unity. Thus, $\lambda_P$ (and consequently $\lambda$ itself) is the sign character. We conclude that $\lambda = \Xi$. Since we assume $xN$ is a generator of $G/N$, it follows that $\Xi(xN) = -1$, and the equality in \eqref{E:weird-case} follows.
	
	We now compute
	$$
	\sum_{g \in xN} f_X(\chi(g))\ol{\chi(g)} = \sum_{g \in xN}\bigg(\sum_{i=0}^{|X|} (-1)^i\sum_{A \in \mathscr{X}_i}\chi^{\prod_{\sigma \in A}\sigma}(g)\bigg)\ol{\chi(g)} $$ $$= \sum_{i=0}^{|X|} (-1)^i\sum_{A \in \mathscr{X}_i}\bigg(\sum_{g \in xN} \chi^{\prod_{\sigma \in A}\sigma}(g) \ol{\chi(g)}\bigg).
	$$
	By our construction of $X$, $\prod_{\sigma \in A}\sigma$ restricts nontrivially to $\Q_{n_{2'}}$ unless $A \subseteq \{\tau_2\}$. Thus, by \eqref{E:identity-case}, \eqref{E:zero-case}, and \eqref{E:weird-case}, the above sum becomes
	$$
	\sum_{g \in xN} f_X(\chi(g))\ol{\chi(g)} = \sum_{g \in xN} \chi(g)\ol{\chi(g)} - \sum_{g \in xN} \chi(g)^{\tau_2}\ol{\chi(g)} = |N| - (-|N|) = 2|N|.
	$$
	However, by \eqref{E:fourinator}, we have that 
	$$
	\sum_{g \in xN} f_X(\chi(g))\ol{\chi(g)} = 4\sum_{\substack{g \in xNG, \\ f_X(\chi(g)) \neq 0}} \alpha_g\ol{\chi(g)}.
	$$
	Combining the above two equations, we obtain that $(2|N|)/4$ is an algebraic integer, contradicting the fact that $|N|$ is odd. 
\end{proof}

\section{Concluding Remarks}

A sort of intermediate question to Navarro's problem and the Feit conjecture is the following question, which was considered in \cite{HH} and earlier in Section 9 of \cite{BKNT}:

\begin{question}\label{other-question}
	Let $G$ be a group, and let $\chi \in \Irr(G)$. When does there exist $g \in G$ such that $c(\chi(g)) = c(\chi)$.
\end{question}

It is straightforward to verify that for any $\chi$ where the conclusion of Theorem A holds, then the answer to this question is yes. Likewise, whenever the answer to this question is yes for some $\chi$, then Feit's conjecture holds for $\chi$. The answer is known to be no in general for imprimitive characters of solvable groups. In particular, \texttt{SmallGroup(960,730)} in GAP \cite{Gap} is a supersolvable group possessing an irreducible character for which the answer is no. However, the answer is yes for primitive characters of solvable groups. In fact, the proof for the solvable case of the Feit conjecture provided in \cite[Theorem 2.22]{Is2} generalizes readily to this question when $\chi$ is assumed to be primitive (or even just fully factorizable):

\begin{theorem}
	Let $G$ be solvable, and let $\chi \in \Irr(G)$ be fully factorizable as a product of $p$-special characters. There exists $g \in G$ such that $c(\chi(g)) = c(\chi)$.
\end{theorem}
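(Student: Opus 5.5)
We want $g$ with $c(\chi(g)) = c(\chi)$. Write $\chi = \prod_{p\in\pi}\chi_p$ with $\chi_p$ $p$-special and $c(\chi_p)=p^{a_p}$, so that $c(\chi)=\prod_p p^{a_p}$ by Lemma \ref{L:Is-stuff}. Set $n = c(\chi)$. If some $\chi_p$ is rational, the prime $p$ does not divide $n$ and can be ignored. Suppose, for a contradiction, that $c(\chi(g))$ is a proper divisor of $n$ for every $g\in G$. Then for every $g$ there is a prime $p$ with $\chi(g)\in \Q_{n/p}$, i.e.\ $\chi(g)$ is fixed by the subgroup $\Gal(\Q_n/\Q_{n/p})$. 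This subgroup is a nontrivial group of automorphisms lying in the single direct factor $\Gal(\Q_{p^{a_p}})$: for $p$ odd it has order $p$ if $a_p\ge 2$ and order $p-1$ if $a_p=1$; for $p=2$ it contains $\zeta_{2^{a_2}}\mapsto -\zeta_{2^{a_2}}$ when $a_2\ge 2$ (when $a_2=1$ nothing is lost, since $\Q_{n}=\Q_{n/2}$ then and that prime is excluded from $n$).

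For each prime $p \mid n$ choose $\sigma_p \in \Gal(\Q_n/\Q_{n/p})$ of prime order. If $a_p\ge2$, $\sigma_p$ has order $p$; if $a_p=1$ with $p$ odd, pick an element of order some prime $q\mid p-1$; for $p=2$, take the map $\zeta_{2^{a_2}}\mapsto-\zeta_{2^{a_2}}$. Let $X=\{\sigma_p\}$ and form $f_X(\chi)$ as in the lemma of Section 3. By part (a) of that lemma, $f_X(\chi(g))=0$ for every $g$, since each $\chi(g)$ is fixed by some $\sigma_p\in X$. So $f_X(\chi)$ is the zero class function.

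On the other hand, part (b) gives
\[
f_X(\chi)=\sum_{A\subseteq X}(-1)^{|A|}\chi^{\prod_{\sigma\in A}\sigma}.
\]
I will show that the characters $\chi^{\prod_{A}\sigma}$ are pairwise distinct irreducible characters. This forces $[f_X(\chi),\chi]=1\ne 0$, which is the desired contradiction.

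Distinctness is the main step. Since the $\sigma_p$ lie in different direct factors, the products $\prod_A\sigma$ are distinct elements of $\Gal(\Q_n)$, and it suffices to show that no nontrivial such product fixes $\chi$. Note that $\chi^{\tau}=\prod_p\chi_p^{\tau}$, and the Galois conjugate of a $p$-special character is again $p$-special. By uniqueness of the factorization, $\chi^\tau=\chi$ forces $\chi_p^{\tau}=\chi_p$ for every $p$. Now $\tau$ acts on $\Q(\chi_p)\subseteq\Q_{p^{a_p}}$ through its component $\tau_p$, and for $p\in A$ this component is $\sigma_p$, which moves $\Q_{p^{a_p}}$ and fixes $\Q_{p^{a_p-1}}$ (or is a nontrivial element when $a_p=1$). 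So it suffices that $\sigma_p$ does not fix $\Q(\chi_p)$.

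This is exactly where I expect care is needed, because we are not assuming $\Q(\chi_p)=\Q_{p^{a_p}}$; for $p=2$ this can genuinely fail, as the quaternion example shows. The key point is that $c(\chi_p)=p^{a_p}$ means $\Q(\chi_p)\not\subseteq\Q_{p^{a_p-1}}$, and we must choose $\sigma_p$ so that its fixed field inside $\Q_{p^{a_p}}$ is contained in $\Q_{p^{a_p-1}}$.
\begin{itemize}
\item For $a_p\ge2$ this is automatic. The group $\Gal(\Q_{p^{a_p}}/\Q_{p^{a_p-1}})$ has prime order ($p$, or $2$ when $p=2$), so it equals $\langle\sigma_p\rangle$. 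Hence the fixed field of $\sigma_p$ is $\Q_{p^{a_p-1}}$, which does not contain $\Q(\chi_p)$.
\item For $a_p=1$ with $p$ odd, the condition $\Q(\chi_p)\neq\Q$ does not by itself prevent an order-$q$ element from fixing $\Q(\chi_p)$. Here I instead invoke Proposition \ref{P:2:1}, which gives $\Q(\chi_p)=\Q_p$, so any nontrivial element moves it.
\end{itemize}

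With this choice of $X$ the characters are distinct, and the contradiction follows.
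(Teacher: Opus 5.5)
Your proposal is correct and is essentially the paper's argument. Each $\sigma_q$ fixes $\chi_p$ for $p\neq q$, so your $f_X(\chi)$ is exactly the paper's product $\prod_q(\chi_q-\chi_q^{\omega_q})$; both proofs show it vanishes identically and then get a contradiction from unique factorization into $p$-special characters. The one difference is that the paper takes $\omega_q$ to be any element of $\Gal(\Q_n/\Q_{n/q})$ that moves a value of $\chi$, which avoids your case split and your appeal to Proposition \ref{P:2:1}; you could do the same by taking $\sigma_p$ to generate $\Gal(\Q_n/\Q_{n/p})$ when $a_p=1$.
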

\begin{proof}
	Let $\chi$ be as above and set $n = c(\chi)$. Let $\chi$ have factorization
	$$
	\chi = \prod_p \chi_p 
	$$
	where each $\chi_p$ is $p$-special.
	By Lemma \ref{L:Is-stuff}, we have that $c(\chi_p) = n_p$ for each $p$. Thus, for any prime divisor of $q$ not equal to $p$, we have that $\Q(\chi_p) \subseteq \Q_{n/q}$ or equivalently that $c(\chi_p)$ divides $n/q$.
	
	Letting $q$ be a prime divisor of $n$, and notice that $\Q_{n/q} \subset \Q(\zeta_{n/q}, \chi) \subseteq \Q_n$. We may choose a Galois automorphism $\omega_q \in \Gal(\Q_n/\Q_{n/q})$ which does not leave $\Q(\zeta_{n/q}, \chi)$ fixed. In particular, $\omega_q$ acts nontrivially on the values of $\chi$, and $\chi^{\omega_q} \neq \chi$.
	
	Now, for distinct primes $p$ and $q$ dividing $n$, we have that $\omega_q$ fixes $\chi_p$ since $\Q(\chi_p) \subseteq \Q_{n/q}$. Thus, since the factorization of $\chi$ into $p$-special characters is unique up to the order of the factors, we conclude that $\chi_q^{\omega_q} \neq \chi_q$.  
	
	If we assume that there exists no $g \in G$ such that $c(\chi(g)) = n$, then for every $x \in G$,  $c(\chi(x))$ properly divides $n$, and there exists a prime $q$ such that $c(\chi(x))$ divides $n/q$. For such an $x$, it follows that for each $p$, we have $\chi_p(x) \subseteq \Q_{n/q}$, and in particular, $\chi_q(x) \in \Q_{n/q}$. We conclude that $\chi_q(x)^{\omega_q} = \chi_q(x)$.	Thus, the below product is identically zero:
	$$
	0= \prod_q (\chi_q - \chi_q^{\omega_q}).
	$$
	Let $\pi(n)$ denote the set of prime divisors of $n.$ We may rewrite this product as
	$$
	0= \sum_{A \subseteq \pi(n)} (-1)^{|A|}\prod_{q} \chi_q^{\sigma_q}
	$$
	where $\sigma_q = \omega_q$ if $q \in A$ and $\sigma_q = 1$ otherwise. Now, one notices that each product of $p$-special characters in the above sum is a distinct irreducible character since fully factorizable characters factor uniquely. Thus, the sum cannot be zero by the linear independence of irreducible characters, a contradiction.
\end{proof}

\subsection*{Disclosure of computational assistance} The author wishes to clarify that artificial intelligence was only used for the purpose of typesetting the bibliography of this manuscript. 

\printbibliography[heading=bibintoc]
	
\end{document}